\newcommand{\bE}{\mathbb{E}}
\newcommand{\bN}{\mathbb{N}}
\newcommand{\bP}{\mathbb{P}}
\newcommand{\cF}{\mathcal{F}}
\newcommand{\cO}{\mathcal{O}}
\newcommand{\fG}{\mathfrak{G}}
\newcommand{\fX}{\mathfrak{X}}
\newcommand{\abs}[1]{\ensuremath{\left\vert#1\right\vert}}
\DeclareMathOperator{\Var}{Var}
\numberwithin{equation}{section}
\newtheorem{theorem}{Theorem}[section]
\newtheorem{lemma}[theorem]{Lemma}
\newtheorem{proposition}[theorem]{Proposition}
\theoremstyle{definition}
\newtheorem{remark}[theorem]{Remark}
\title{A phase transition in Barak-Erd\H{o}s random graphs}
\author{
    \begin{tabular}{cc}
    Gilles \textsc{Blanchard}\thanks{Universit\'e Paris-Saclay.\hfill  \href{mailto:gilles.blanchard@universite-paris-saclay.fr}{\texttt{gilles.blanchard@universite-paris-saclay.fr}}}
    \qquad\&\qquad &
    Nicolas \textsc{Curien}\thanks{Universit\'e Paris-Saclay.\hfill  \href{mailto:nicolas.curien@gmail.com}{\texttt{nicolas.curien@gmail.com}}} \\
    \&\quad Klara \textsc{Krause}\thanks{Universit\'e Paris-Saclay.\hfill  \href{mailto:klara.krause@universite-paris-saclay.fr}{\texttt{klara.krause@universite-paris-saclay.fr}}}
    \qquad\&\qquad & Alexander \textsc{Reisach}\thanks{Universit\'e Paris-Saclay.\hfill  \href{mailto:alexander.reisach@universite-paris-saclay.fr}{\texttt{alexander.reisach@universite-paris-saclay.fr}}}
    \end{tabular}
}
\date{}
\begin{document}
\maketitle


\begin{abstract}
We study monotone paths in Erd\H{o}s-Rényi random graphs on numbered vertices. This model appeared in \cite{Benjamini2022} where Benjamini \& Tzalik established a phase transition at $p = \frac{\log n}{n}$. We refine the critical value to $p = \frac{\log n - \log \log n }{n}$ and identify the critical window of order $\Theta(1/n)$. 
\end{abstract}

\section{Introduction}

Recall the definition of a Barak-Erd\H{o}s random graph $G_p$ on the positive integers $\bN$: A directed edge $(i,j)$ between any two vertices $i < j \in \bN$ exists independently  with probability $p \in (0,1)$. We denote this law by $\mathbb{P}_p$. We say that there is a monotone path from $1$ to $n$, write $1 \nearrow n$, if there exists a path $1 = i_1 \to i_2 \to \cdots \to i_k = n$ in $G_p$ whith $i_1 < i_2 < \cdots < i_k$. We establish a phase transition for the increasing event 
\[
\left\{1 \nearrow n \textnormal{ in } G_p \right\}.
\]

\begin{theorem}[Critical window] \label{Thm.critical.window}
   The critical window of the event $\left\{ 1 \nearrow n \textnormal{ in } G_{p_n} \right\}$ is of order $\Theta (1/n)$ around $
    \frac{\log n - \log \log n}{n}$. More precisely, if $x \in \mathbb{R}$
    \[
    \mbox{ for } p_{n,x} = \frac{\log n - \log \log n + x}{n} \quad  \mbox{ we have } \quad         \bP _{p_{n,x}}\left( 1 \nearrow n \right) 
        \xrightarrow[n \to \infty]{}  1 - e^{e^{-x} - x} \int_{e^{-x}}^\infty \frac{e^{-t}}{t} \mathrm{d} t.
    \]
\end{theorem}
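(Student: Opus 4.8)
The plan is to reduce the event $\{1\nearrow n\}$ to a one-dimensional Markov chain counting reachable vertices and then to identify the limiting law of that chain. For $1\le m\le n$ let $T_m$ be the number of vertices $i\in\{1,\dots,m\}$ with $1\nearrow i$ (so $T_1=1$). Since the edges pointing into a vertex $m$ are independent of the reachability status of all vertices below $m$, the sequence $(T_m)$ is a Markov chain with $T_m=T_{m-1}+\one_{\{m\text{ reachable}\}}$, where conditionally on $T_{m-1}$ the indicator is Bernoulli with parameter $1-(1-p)^{T_{m-1}}$. The whole problem is then encoded in the identity
\[
\bP_p(1\nearrow n)=\bE\big[\,1-(1-p)^{T_{n-1}}\,\big]=1-\bE\big[(1-p)^{T_{n-1}}\big].
\]
Writing $q=1-p$ and $\phi^{(r)}_m=\bE[q^{rT_{m-1}}]$, the chain even yields the exact triangular recursion $\phi^{(r)}_{m+1}=q^{r}\phi^{(r)}_m+(1-q^{r})\phi^{(r+1)}_m$ with $\phi^{(r)}_1=1$, so the target quantity $1-\phi^{(1)}_n$ is in principle explicitly computable.

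First I would pass to the gap representation of the chain. Let $1=v_1<v_2<\cdots$ be the reachable vertices; then the gaps $v_{j+1}-v_j$ are independent with $v_{j+1}-v_j\sim\mathrm{Geom}(1-(1-p)^j)$, and $T_{n-1}=\max\{k:v_k\le n-1\}$. The point is that $p\,T_{n-1}$ has a nondegenerate limit $W$, so by bounded convergence it suffices to prove $\bE[(1-p)^{T_{n-1}}]\to\bE[e^{-W}]$ together with the identification of $W$. Heuristically $Z_m:=p\,T_m$ follows the fluid ODE $\mathrm{d}Z=p\,(1-e^{-Z})\,\mathrm{d}m$, whose exact solution is $e^{Z_m}=1+C\,e^{pm}$; evaluating at $m=n$ and using $p\,e^{pn}\to e^{x}$ gives $e^{W}=1+e^{x}C$, so the entire limiting law is carried by the random prefactor $C$ fixed during the early growth phase.

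The heart of the argument is to show that $C$ is a constant multiple of a standard exponential. For small $j$ one has $1-(1-p)^j\sim pj$, so $p\,(v_{j+1}-v_j)\Rightarrow\mathcal E_j/j$ with $(\mathcal E_j)$ i.i.d.\ standard exponential, and hence $p\,v_k\Rightarrow\sum_{j<k}\mathcal E_j/j$. By R\'enyi's representation $\sum_{j\le k}\mathcal E_j/j\stackrel{d}{=}\max_{j\le k}\mathcal E_j$, so the centred sum converges a.s.\ to a finite limit $R_\infty$ with $\gamma+R_\infty$ standard Gumbel; equivalently $\mathcal E:=e^{-\gamma-R_\infty}$ is standard exponential. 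Matching the linearised early growth $Z\approx C e^{pm}$ to $p\,v_k\approx\log k+\gamma+R_\infty$ identifies $C=p\,\mathcal E$, whence $e^{W}=1+e^{x}\mathcal E$. Feeding this into the bounded-convergence limit and computing the integral,
\[
\bE[e^{-W}]=\bE\!\left[\frac{1}{1+e^{x}\mathcal E}\right]=\int_0^\infty\frac{e^{-u}}{1+e^{x}u}\,\mathrm{d}u=e^{e^{-x}-x}\int_{e^{-x}}^\infty\frac{e^{-t}}{t}\,\mathrm{d}t,
\]
gives exactly the claimed limit of $1-\bP_{p_{n,x}}(1\nearrow n)$.

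The main obstacle is making this two-regime picture rigorous: one must glue the random early phase (small $j$, where the exponential seed $\mathcal E$ is born) to the essentially deterministic bulk (intermediate and large $j$, up to the crossover $j\sim 1/p$ where the gaps cease to be large and become $\approx 1$). Concretely, I expect the work to lie in (i) a concentration estimate showing that, conditionally on the early phase, the remaining evolution of $Z_m$ sticks to the ODE trajectory with $o(1)$ fluctuation, and (ii) a careful matching of the logarithmic divergences of the early sum $\sum_j\mathcal E_j/j$ and of the bulk integral $p\sum_j(1-(1-p)^j)^{-1}$, which is what pins down both the constant $1$ and the factor $e^{x}$ in $e^{W}=1+e^{x}\mathcal E$. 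It is precisely this additive constant $1$, arising from the saturation term $1-e^{-Z}$ rather than from $Z$ alone, that turns the answer from an elementary logistic expression into the exponential integral in the statement, so the nonlinearity of the chain near the endpoint cannot be dropped.
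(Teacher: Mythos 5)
Your plan is, at its core, the same as the paper's: both rest on the decomposition of the reachable set into independent $\mathrm{Geom}(1-(1-p)^j)$ gaps, a coupling of these with exponentials, the Gumbel limit of $\sum_{j\le k}\mathcal{E}_j/j-\log k$, the identification $e^{W}=1+e^{x}e^{-\fG}$ with $e^{-\fG}$ standard exponential, and the same final integral (your substitution $t=e^{-x}+u$ checks out against the stated limit). The one organisational difference is that the paper works with the \emph{inverse} process $P_p(a)=\inf\{k:N_k=\lfloor a/p\rfloor\}=\sum_{i\le\lfloor a/p\rfloor}X_i$, i.e.\ position as a function of count, rather than your forward chain $Z_m=pT_m$. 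This choice buys a lot: $P_p(a)$ is an explicit sum of \emph{independent} variables, so your obstacle (i) --- a concentration estimate showing the forward Markov chain sticks to the ODE trajectory --- disappears entirely; only first and second moments of a rest term are needed. Your obstacle (ii), the matching of logarithmic divergences, is exactly the paper's Lemma \ref{Lemma.rest.term}: $p\sum_{i\le a/p}\bigl(1/q_i-1/(ip)\bigr)\to\int_0^a\bigl(\tfrac{1}{1-e^{-s}}-\tfrac{1}{s}\bigr)\,\mathrm{d}s=\log(e^a-1)-\log a$, which is precisely where the additive constant $1$ in $e^{W}=1+e^{x}\mathcal{E}$ (your ``saturation term'') comes from. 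One point you do not address at all is that the limit must be taken at the \emph{random} count level $\fX_n$ defined by $P_{p_n}(\fX_n)=n$, which requires convergence of the exploration process uniformly in $a$, not just for fixed $a$; the paper obtains this from monotonicity in $a$ via a probabilistic Dini argument and then Skorokhod representation. As written, your proposal correctly identifies the answer and every key random object, but the two steps you yourself flag as ``the heart'' and ``the main obstacle'' are declared rather than executed; replacing the forward-chain/ODE picture by the inverse sum of geometrics reduces both to the elementary moment computation above and is the cleanest way to complete your argument.
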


\begin{remark}
    An alternative expression is 
    \[
    \bP _{p_{n,x}}\left( 1 \nearrow n \right) \xrightarrow[n \to \infty]{} \frac{1}{2} + \frac{1}{2} \bE \left( \tanh\frac{x - \fG}{2} \right)
    \]
    for a standard Gumbel variable $\fG$ with density $p_\fG (z) = \exp(-(z+ e^{-z}))$.
\end{remark}

The location and width of the phase transition might be guessed using the expected number of monotone paths in $G_p$. Indeed, a simple counting exercise shows that
\[
    \bE_{p} \left( \# \textnormal{ paths } 1 \nearrow n \right) = \sum_{k=0}^{n-2} \binom{n-2}{k} p^{k+1} = p(1+p)^{n-2}
\]
and for $p  \equiv p_{n,x}$ the above display converges to $e^x$ as $n \to \infty$.


\begin{figure}[h]
    \centering
    
    \begin{tikzpicture}[
  node distance=0.6cm,
  every node/.style={circle, draw, minimum size=6mm, inner sep=0pt, font=\scriptsize},
  >=Stealth
]

\tikzset{gray vertex/.style={circle, fill=gray!70, inner sep=2pt}}

\node[gray vertex] (1) {1};
\node[below=0.4mm of 1, circle=none, draw=none, font=\small] {$N_1 = 1$};
\node[right=of 1] (2) {2};
\node[right=of 2] (3) {3};
\node[gray vertex, right=of 3] (4) {4};
\node[below=0.4mm of 4, circle=none, draw=none, font=\small] {$N_4 = 2$};
\node[right=of 4] (5) {5};
\node[right=of 5] (6) {6};
\node[right=of 6] (7) {7};
\node[gray vertex, right=of 7] (8) {8};
\node[below=0.4mm of 8, circle=none, draw=none, font=\small] {$N_8 = 3$};
\node[right=of 8] (9) {9};
\node[right= 3mm of 9, circle=none, draw=none] {$\cdots \cdots$};

\draw[->, blue, bend left=30] (1) to (4);  
\draw[->, blue, bend left=25] (2) to (7);  
\draw[->, blue, bend right=30] (3) to (4); 
\draw[->, blue, bend left=25] (5) to (6);  
\draw[->, blue, bend right=25] (4) to (8); 
\draw[->, blue, bend left=25] (6) to (9);  

\end{tikzpicture}

    \caption{A segment from a sampled graph $G_p$ satisfying $1 \nearrow 4$ and $1 \nearrow 8$.}
    \label{fig:}
\end{figure}
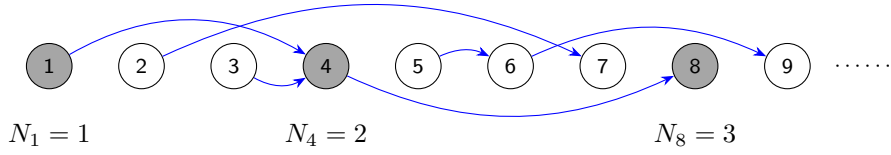


Erd\H{o}s-Rényi graphs on numbered vertices $\{1, \ldots, n\}$ appeared as an ingredient to recover point configurations in \cite{Benjamini2022}. They prove a sharp threshold at $p = \frac{\log n }{n}$ for the event $1 \nearrow n$. Directed acyclic Erdös-Rényi graphs were introduced in \cite{barak.erdos1984} and studied as Barak-Erd\H{o}s graphs since. The articles \cite{foss2003extended}, \cite{mallein2021} and \cite{FoKoMaRa2024} established maximal path lengths in Barak-Erd\H{o}s graphs, both for fixed $p \in (0,1)$ and $p \to 0$.\\
Numbered vertices can be interpreted as time stamps: each vertex becomes available from its assigned time onward. Related models include random temporal graphs, where monotonicity of paths is considered along edges instead of vertices, see for instance \cite{AngelFerber...2018}, \cite{CasteigtsRaskin2024} and \cite{BroutinKamcevLugosi2024}. These works focus on the length of monotone paths and identify several transition phenomena around $p = c \frac{\log n}{n}$ for both longest and shortest such paths. \\
A vertex order appears naturally if the nodes correspond to events unfolding in time, as for example in graphical models of causality \cite{pearl2009causality}. Erd\H{o}s-Rényi graphs directed according to a random vertex order are of particular interest to research in causal discovery \cite{spirtes2001causation}, where they are widely used in the benchmarking of algorithms that learn causal graphs from data.

\paragraph{Acknowledgments}
The first and fourth authors are supported by the ANR grant BISCOTTE - ANR-19-CHIA-0021. The second and third authors are supported by the SuPerGRandMa, the ERG Consolidator no 101087572.

\section{Proof}

The central ingredient to prove Theorem \ref{Thm.critical.window} is the introduction of an exploration process. It indicates the position of having passed a certain number of vertices that are reachable from $1$ by a monotone path. We show that, after suitable centering and scaling, this exploration process converges to a Gumbel variable with a deterministic drift in Section \ref{Sec.exploration.process}. This allows to compute explicitly the probability that a monotone path $1 \nearrow n$ exists.


Define 
\[
    N_k \coloneq \# \left\{1 \leq i \leq k: 1 \nearrow i\right\}
\]
the number of vertices up to $k$ that are reachable from $1$ by a monotone path and set $\cF_k = \sigma(N_1, \ldots, N_k)$. A new vertex $k$ is reachable from $1$ precisely when there is an edge to at least one of the $N_{k-1}$ previously reachable vertices. Hence 
\[
    \bP_p \left(N_{k} = N_{k-1} +1 \vert \cF_{k-1}\right) 
    = 1 - \bP_p\left( N_{k} = N_{k-1} \vert \cF_{k-1} \right) 
    = 1 - (1-p)^{N_{k-1}}
\] 
and 
\begin{equation}\label{Eq.Proba.1.to.k}
    \bP_p(1 \nearrow k) = \bE \left( 1-(1-p)^{N_{k-1}} \right).
\end{equation}
The probability that successive gaps between reachable vertices have sizes $k_1, \ldots,k_l \geq 1$ is 
\[
\bP_p\!\left(
\overset{1}{\bullet}\,
\underbrace{\cdots}_{k_1}\,
\bullet\,
\underbrace{\cdots}_{k_2}\,
\bullet\,
\cdots \cdots \,
\bullet\,
\underbrace{\cdots}_{k_l}\,
\bullet
\right)
 = \prod_{i=1}^l ((1-p)^i)^{k_i} \left( 1- (1-p)^i \right)
\]
where each large bullet marks a vertex reachable from $1$ and each group of small dots represents the vertices in the corresponding gap: The $i$-th gap is a geometric variable with parameter $1-(1-p)^i$, and the gaps are pairwise independent.


\subsection{Exploration process} \label{Sec.exploration.process}

Fix $p \in (0,1)$ and consider the monotone exploration process $(P_p(a))_{a> 0}$ where
\[
P_p(a) \coloneq \inf \left\{ k \geq 1: N_k = \lfloor a/p \rfloor \right\}
\] 
denotes the position of the $\lfloor a/p \rfloor$-th vertex reachable from $1$ by a monotone path.  
Since $P_p(a)$ is obtained by summing the successive gap lengths, we have  
\[
P_p(a) = \sum_{i=1}^{\lfloor a/p \rfloor} X_i, \quad X_i \sim \mathrm{Geom}\left(1-(1-p)^i\right) \text{ independent}. 
\]
The exploration process $(P_p(a))_{a> 0}$ centered and rescaled, converges, as $p \to 0$, to a standard Gumbel variable $\fG$ with a deterministic drift depending on $a$.


\begin{proposition}[Convergence of the monotone exploration process] \label{Prop.convergence.to.Gumbel}
   We have \[ \left( p P_p(a) - \log \frac{1}{p} \right)_{a > 0} \xrightarrow[p \to 0]{(d)} \left( \fG + \log(e^a -1) \right)_{a > 0} \] 
for the uniform convergence over every compact subset of $(0, \infty)$.
\end{proposition}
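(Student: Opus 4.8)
The plan is to fix $a>0$ and decompose
\[
p P_p(a) - \log\frac{1}{p} \;=\; \underbrace{\Big(p\,\bE P_p(a) - \log\tfrac{1}{p}\Big)}_{\text{deterministic centering}} \;+\; \underbrace{p\big(P_p(a) - \bE P_p(a)\big)}_{\text{fluctuation}},
\]
and to show that the two pieces converge respectively to $\gamma + \log(e^a-1)$ (with $\gamma$ Euler's constant) and to the centred Gumbel $\fG-\gamma$. Throughout I write $q_i=1-(1-p)^i$ and use $\bE X_i = 1/q_i$, $\Var X_i = (1-q_i)/q_i^2$ for $X_i\sim\mathrm{Geom}(q_i)$ on $\{1,2,\dots\}$. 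Once the one-dimensional statement holds, the functional conclusion will come from monotonicity of $a\mapsto P_p(a)$.

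For the centering I would write
\[
p\,\bE P_p(a) \;=\; p\sum_{i=1}^{\lfloor a/p\rfloor}\frac{1}{q_i} \;=\; \sum_{i=1}^{\lfloor a/p\rfloor}\frac{1}{i} \;+\; p\sum_{i=1}^{\lfloor a/p\rfloor}\Big(\frac{1}{q_i}-\frac{1}{ip}\Big),
\]
the first sum being the harmonic number $H_{\lfloor a/p\rfloor}=\log\frac{1}{p}+\log a+\gamma+o(1)$. In the second sum $\frac{1}{q_i}-\frac{1}{ip}\to\frac{1}{1-e^{-t}}-\frac{1}{t}$ at $t=ip$, a function that stays bounded near $0$ (value $1/2$), so it is a Riemann sum of step $p$ converging to $\int_0^a\big(\frac{1}{1-e^{-t}}-\frac{1}{t}\big)\,\mathrm dt = \log\frac{e^a-1}{a}$. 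Adding the two contributions, the $\log a$ cancels and yields $p\,\bE P_p(a)-\log\frac{1}{p}\to\gamma+\log(e^a-1)$.

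For the fluctuation the guiding observation is that a geometric of vanishing parameter, suitably rescaled, becomes exponential: for each fixed $i$, $pX_i\xrightarrow{(d)}E_i/i$ with $E_i$ i.i.d.\ $\mathrm{Exp}(1)$ (since $q_i\sim ip$), hence $p(X_i-\bE X_i)\to(E_i-1)/i$. By independence this holds jointly over $i\le K$; to pass to the full sum I would control the tail in $L^2$ uniformly in $p$, using $q_i\ge c\min(ip,1)$ to bound $\sum_{i>K}p^2\Var X_i \le c'\sum_{i>K}i^{-2}+O(p)$, small for large $K$ uniformly as $p\to0$. This identifies the fluctuation limit as $S:=\sum_{i\ge1}(E_i-1)/i$ (convergent by Kolmogorov). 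The key computation is then the exponential moment $\bE e^{sS}=\prod_{i\ge1}e^{-s/i}(1-s/i)^{-1}=e^{-\gamma s}\Gamma(1-s)$, by the Weierstrass product for $\Gamma$; whence $\fG:=\gamma+S$ has moment generating function $\Gamma(1-s)$, i.e.\ $\fG$ is exactly the standard Gumbel. Combining with the centering gives the one-dimensional limit $\fG+\log(e^a-1)$.

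It remains to upgrade to the functional statement. Since $a\mapsto P_p(a)$ is non-decreasing and the limit $a\mapsto\fG+\log(e^a-1)$ is continuous and increasing, it suffices to establish convergence of the finite-dimensional marginals with a single shared $\fG$, after which a Pólya/Dini argument (run pathwise through a Skorokhod coupling on a countable dense set of times) promotes it to uniform convergence on every compact. The sharing of $\fG$ across all $a$ comes from the increments carrying no fluctuation in the limit: for $a<a'$ one has $\sum_{i>a/p}p^2\Var X_i \le c_a^{-2}p^2\sum_{i>a/p}(1-p)^i=O(p)\to0$, so $p(P_p(a')-P_p(a))$ concentrates on its mean. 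I expect the main obstacle to be exactly this two-scale bookkeeping in the fluctuation term—showing that the growing number $\lfloor a/p\rfloor\to\infty$ of asymptotically negligible coordinates contribute nothing beyond the finitely many dominant small-index ones—together with justifying the Riemann-sum limit across the integrable singularity at $t=0$; the recognition of $\gamma+S$ as a Gumbel through the $\Gamma$-product is the clean conceptual core.
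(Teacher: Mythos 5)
Your argument is correct and reaches the same limit, but it is organised genuinely differently from the paper's proof. The paper couples each geometric $X_i$ to an exponential $Y_i=-\log U_i$ through a common uniform, transfers the whole sum to $\sum_{i\le \lfloor a/p\rfloor} Y_i/i$ up to a rest term with converging mean and vanishing variance (Lemma \ref{Lemma.rest.term}), and then quotes the classical convergence \eqref{Eq.Exponentials.to.Gumbel}. You instead split off the deterministic mean $p\,\bE P_p(a)$ — your Riemann-sum computation of $p\sum_i(1/q_i-1/(ip))\to\log\frac{e^a-1}{a}$ is exactly the paper's expectation estimate — and handle the fluctuation by truncation: finitely many small-index geometrics converge jointly to $(E_i-1)/i$, the $L^2$ tail is uniformly small via $q_i\ge c\min(ip,1)$, and the limit $\gamma+\sum_{i\ge1}(E_i-1)/i$ is identified as a standard Gumbel through the Weierstrass product $\prod_i e^{-s/i}(1-s/i)^{-1}=e^{-\gamma s}\Gamma(1-s)$. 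This buys self-containedness (no pathwise coupling bound \eqref{couplingbound}, no external reference for the Gumbel limit) at the cost of a three-epsilon tightness argument that the paper's coupling sidesteps. Two points to make explicit in a final write-up: (i) in the Riemann sum you silently replace $q_i=1-(1-p)^i$ by $1-e^{-ip}$; the per-term error is $O(1/i)$, summing to $O(p\log(1/p))=o(1)$, which is precisely the correction the paper spells out; (ii) your observation that $p\bigl(P_p(a')-P_p(a)\bigr)$ has $O(p)$ variance is exactly what forces a single shared $\fG$ across all $a$, and combined with monotonicity of $a\mapsto P_p(a)$ and continuity of $a\mapsto\log(e^a-1)$ it feeds into the same probabilistic Dini argument the paper uses to close the functional statement.
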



\begin{proof}[Proof of Proposition \ref{Prop.convergence.to.Gumbel}]
    Define $q_i \coloneq 1-(1-p)^i$. Assume without loss of generality that $p < 1/2$. For any $p< 1/2$ and $i \leq \lfloor a/p \rfloor$ it holds $q_i\leq 1- (1-p)^{\lfloor a/p \rfloor}\leq B(a)<1$ for some constant $B(a)$ independent of $p$. Each $P_p(a)$ is the sum of independent geometric variables with expectation $1/q_i$. \\
   We first approximate the rescaled geometric variables $q_iX_i$ by i.i.d. exponential variables via a coupling: Let $(U_i)_i$ be i.i.d. uniform variables on $[0,1]$. Set 
    \[
    Y_i = - \log U_i \, \textnormal{ and } \, X_i = \inf\left\{k \geq 1: U_i \geq (1-q_i)^k\right\}
    \] 
    so that $Y_i$ are exponential variables with intensity $1$ and $X_i$ are geometric variables with mean $1/q_i$. Then $ \log \left(\frac{1}{1-q_i}\right) (X_i - 1)  \leq Y_i $ which allows the bound
    \begin{equation}\label{couplingbound}
        \abs{q_iX_i - Y_i} \leq \abs{ \frac{q_i}{\log(1/(1-q_i))} -1 } Y_i + q_i \leq C(a) q_i (Y_i +1)
    \end{equation}
    for some constant $C(a)>0$, using that $q_i < B(a) <1$.\\

    In a second step, we express the exploration process using the exponential variables $Y_i$ and show that the occurring error has converging expectation and vanishing variance, that is
    \begin{align*}
        p P_p(a) - \log \frac{1}{p} &\overset{(d)}{=} \sum_{i=1}^{\lfloor a/p \rfloor} \frac{1}{i} Y_i - \log\frac{1}{p} + R_p(a) 
    \end{align*}
    with rest term \[R_p(a) = \sum_{i=1}^{\lfloor a/p \rfloor} p \left( \frac{1}{q_i} - \frac{1}{ip} \right) Y_i + \sum_{i=1}^{\lfloor a/p \rfloor} \frac{p}{q_i} (q_iX_i - Y_i).\]  

    \begin{lemma}\label{Lemma.rest.term}
        As $p\to 0$, the rest term satisfies 
        \[
        \bE(R_p(a)) = \log (e^a - 1) - \log a + o(1) \, \text{ and } \, \Var(R_p(a)) = o(1).
        \] 
    \end{lemma}
    In a last step, we use the classical fact that the sum of the rescaled exponential variables, properly centered, converges in law to a standard Gumbel variable,
    \begin{equation} \label{Eq.Exponentials.to.Gumbel}
        \sum_{i=1}^{\lfloor a/p \rfloor} \frac{1}{i} Y_i - \log \frac{1}{p} \xrightarrow[p \to 0]{(d)} \fG + \log a,
    \end{equation}
    see e.g.~\cite[Lemma 11.2]{Curien2025}. 
    Therefore
    \begin{align*}
        p P_p(a) - \log \frac{1}{p} &\xrightarrow[p \to 0]{(d)} \fG + \log (e^a - 1)
    \end{align*}
    for any fixed $a>0$. In fact, this is the same Gumbel variable for any two $a_1, a_2 > 0$.\\
    We now conclude the convergence as a process in $a>0$ via a probabilistic version of Dini's lemma, see \cite[Theorem 11.6]{Curien2025}:
    The process \[\left( pP_p(a) - \log \frac{1}{p} - \fG\right)_{a > 0}\] is increasing in $a$ and for every fixed $a> 0$ it converges in law to $\log (e^a -1)$ which is an increasing continuous function. Therefore Dini's lemma implies the convergence as a process \[\left( pP_p(a) - \log \frac{1}{p} - \fG\right)_{a > 0} \xrightarrow[p \to 0]{(d)} \left(\log(e^a-1 ) \right)_{a > 0}\] for the topology of uniform convergence over every compact subset of $(0,\infty).$
    \end{proof}


It remains to prove Lemma \ref{Lemma.rest.term}.

\begin{proof}[Proof of Lemma \ref{Lemma.rest.term}]

For the expectation notice that $\bE(q_iX_i) = 1 = \bE(Y_i)$ so that
\begin{align*}
    \bE(R_p(a)) &= \bE \left( \sum_{i=1}^{\lfloor a/p \rfloor} p \left( \frac{1}{q_i} - \frac{1}{ip} \right) Y_i \right) 
    = p \sum_{i=1}^{\lfloor a/p \rfloor} \frac{1}{q_i} - \frac{1}{ip} \\
    &= p \sum_{i=1}^{\lfloor a/p \rfloor} \left( \frac{1}{1-e^{-ip}} - \frac{1}{ip} \right) + p \sum_{i=1}^{\lfloor a/p \rfloor} \left(\frac{1}{q_i} - \frac{1}{1-e^{-ip}} \right) \\
    & \leq\int_0^a\left( \frac{1}{1-e^{-s}} -\frac{1}{s} \right) \mathrm{d} s + p  \sum_{i=1}^{\lfloor a/p \rfloor} \frac{C(a)}{i} \\
    &= \log (e^a-1) - \log a + o(1)
\end{align*}
for some constant $C(a)>0$ depending on $a$.
For the variance write 
\[
Z_i = \left( \frac{1}{q_i} - \frac{1}{ip} \right) Y_i + \frac{1}{q_i} (q_iX_i - Y_i) \leq c_1(a) Y_i + c_2(a)
\]
for some constants $c_1(a) > 0$ and $c_2(a) > 0$, using the bound $\left(\frac{1}{q_i} - \frac{1}{ip}\right) \leq \frac{a}{2(1-e^{-a})} $ for $i \leq \lfloor a/p \rfloor$ and (\ref{couplingbound}). Then, as $Y_i \sim \operatorname{Exp}(1)$ for all $i$, this yields 
\[
 \Var (Z_i) \leq \bE\left(Z_i ^2\right) = \cO(1).
\]
Independence of the $Z_i $ thus yields 
\[
 \Var \left( R_p(a) \right) = p^2 \sum_{i=1}^{\lfloor a/p \rfloor} \Var (Z_i ) = o(1).
\]
\end{proof}


\subsection{Critical window} \label{Sec.proof.prob}


We now choose $p$ dependent on $n$ and let $n \to \infty$. More precisely, let $b > 0$ and let $p = p_n$ satisfy 
\begin{equation} \label{Eq.choice.pn}
    n = \frac{\log (b/p_n)}{p_n}.
\end{equation}


\begin{proof}[Proof of Theorem \ref{Thm.critical.window}]
    For each $n \in \bN$ define the random variable $\fX_n$ as the number of vertices smaller than $n$ that are reachable from $1$ by a monotone path, scaled by $p_n$, that is 
    \[
    P_{p_n} (\fX_n) = n \quad \text{with} \quad \lfloor \fX_n/p_n \rfloor = N_n.
    \] 
    Then, by (\ref{Eq.Proba.1.to.k})
    \begin{equation} \label{Eq.1.to.n.expressed.by.X}
         \bP_{p_n}\left( 1 \nearrow n \right) = \bE\left(1-(1-p_n)^{\lfloor \fX_n/p_n \rfloor -1}\right) = \bE \left(1 - e^{-\fX_n} (1 + o(1)) \right).
    \end{equation}
    On the other hand, by the convergence in law from Proposition \ref{Prop.convergence.to.Gumbel} and the Skorokhod representation theorem there is a probability space with a variable $\tilde{\fG}$ and a process $\tilde{P}_{p_n}$ inducing variables $\tilde{\fX}_n$ with the same law as $\fG$, $P_{p_n}$ and $\fX_n$ respectively such that almost-sure-convergence
    \[
        np_n - \log \frac{1}{p_n} - \log \left( e^{\tilde{\fX}_n} -1 \right) \xrightarrow[n \to \infty]{} \tilde{\fG} \quad \text{a.s.}
    \]
    holds. Therefore
    \begin{equation} \label{Eq.X.as.function.of.Gumbel}
        1 - e^{-\tilde{\fX}_n}  \xrightarrow[n \to \infty]{} 1 - \frac{1}{1+ be^{-\tilde{\fG}}} \quad \text{a.s.}
    \end{equation}
    and this convergence holds in law for $\fX$ and $\fG$. Combine (\ref{Eq.1.to.n.expressed.by.X}) and (\ref{Eq.X.as.function.of.Gumbel}) to compute
    \begin{equation}\label{Eq.nontrivial.probability.in.terms.of.b}
        \bP_{p_n}\left( 1 \nearrow n \right) 
        = 1- \bE \left( \frac{1}{1 + b e^{-\fG}} \right) + o(1)
        = 1 - \frac{1}{b}e^{1/b} \int_{1/b}^\infty \frac{e^{-t}}{t} \mathrm{d} t + o(1). 
    \end{equation}
    The function $f(b) = \frac{1}{b}e^{1/b} \int_{1/b}^\infty \frac{1}{t} e^{-t} \mathrm{d}  t$ is continuous on $(0, \infty)$ with asymptotics
    \[
    f(b)  \xrightarrow{b \to \infty} 0 \quad \text{and} \quad f(b) \xrightarrow{b \to 0} 1.
    \]
    
    So for any $b> 0$ the choice $p_n$ in (\ref{Eq.choice.pn}) yields a non-trivial probability for $\{1 \nearrow n \}$. It remains to determine the matching value $b = b_x$ when choosing $p_{n,x}$ as in the theorem. From (\ref{Eq.choice.pn}) we have 
    \[
    b_x = p_{n,x}e^{np_{n,x}} = \left(\frac{\log n - \log \log n}{n}+ \frac{x}{n}\right) \frac{n}{\log n} e^x = \left( 1 + \frac{x - \log \log n}{\log n} \right) e^x \sim e^x.
    \] 
    Plug this into equation (\ref{Eq.nontrivial.probability.in.terms.of.b}).
\end{proof}

%

\begin{remark}[Number of paths]

    Denote by $M_a^+$ the number of paths $1 \nearrow P_p(a)$, conditioned to be positive. We conjecture that, as $p \to 0$, $M_a^+$ converges to a geometric random variable with parameter $e^{-a}$. Heuristically, it satisfies the recursive relation
    \[
        M_a^+ \overset{(d)}{=} \sum_{i=1}^W M_{aU_i}^+ 
    \]
    where $W$ is a $\mathrm{Poisson}(a)$ random variable conditioned to be positive, and $(U_i)_{i \geq 1}$ are i.i.d $\mathrm{Unif}[0,1]$ variables.
\end{remark}

\printbibliography
\end{document}